\numberwithin{equation}{section}
\newtheorem{theorem}{Theorem}[section]
\newtheorem{definition}[theorem]{Definition}
\newtheorem{proposition}[theorem]{Proposition}
\newtheorem{corollary}[theorem]{Corollary}
\newtheorem{lemma}[theorem]{Lemma}
\newtheorem{remark}[theorem]{Remark}
\newtheorem{example}[theorem]{Example}
\newtheorem{problem}[theorem]{Problem}
\newcommand{\cali}[1]{\mathscr{#1}}
\newcommand{\vol}{\mathop{\mathrm{vol}}}
\newcommand{\ddc}{dd^c}
\newcommand{\Sing}{\text{\normalfont Sing}}
\newcommand{\B}{\mathbb{B}}
\newcommand{\C}{\mathbb{C}}
\newcommand{\N}{\mathbb{N}}
\renewcommand\P{\mathbb{P}}
\title{\bf  Volume of components of Lelong upper-level sets}
\providecommand{\keywords}[1]{\textbf{\textit{Keywords:}} #1}
\providecommand{\subject}[1]{\textbf{\textit{Mathematics Subject Classification 2010:}} #1}
\author{Do Duc Thai and Duc-Viet Vu}
\newcommand{\Addresses}{{
		\bigskip
		\footnotesize
		\textsc{Duc-Viet Vu, University of Cologne, Division of Mathematics, Department of Mathematics and Computer Science, Weyertal 86-90, 50931, K\"oln,  Germany.}
		\noindent
		\par\nopagebreak
		\noindent
		\textit{E-mail address}: \texttt{vuviet@math.uni-koeln.de}
\newline
		
		\textsc{Do Duc Thai, Department of Mathematics, Hanoi National University of Education, 136 XuanThuy str., Hanoi, Vietnam.}
		\noindent
		\par\nopagebreak
		\noindent
		\textit{E-mail address}: \texttt{doducthai@hnue.edu.vn}	
}}
\date{\today}
\begin{document}
\maketitle
\begin{abstract} We prove an upper bound for the volume of maximal analytic sets on which the generic Lelong number of a closed positive current is positive. As a particular case, we give a uniform upper bound on the volume of the singular locus of an analytic set in terms of its volume  on a compact K\"ahler manifold.  %We also show the existence of closed positive $(1,1)$-current in a big and nef class with a controlled singularity along a given analytic subset. 
\end{abstract}
\noindent
\keywords Density current, Lelong number, Lelong filtration, Dinh-Sibony product.
\\

\noindent
\subject{32U15}, {32Q15}.

%\tableofcontents

%%%%%%%%%%%%%%%%%%%%%%%%%%%%%
%%%%%%%%%%%%%%%%%%%%%%%%%%%%%%%%%%

\section{Introduction}

Let $T$ be a closed positive current on a compact K\"ahler manifold $X$ of dimension $n$. For every irreducible analytic subset $V$ in $X$, we denote by $\nu(T,V)$ the generic Lelong number of $T$ along $V$. For basics of Lelong numbers, we refer to \cite{Demailly_ag}. Let $W$ be an irreducible analytic subset of dimension $m$ on $X$. Let $\cali{V}_{T,W}$ be the set of (proper) irreducible analytic subsets $V$ of $W$ such that $\nu(T,V)>0$, and $V$ is maximal with respect to the inclusion of sets, \emph{i.e,} if $V'$ is another proper irreducible analytic set in $W$ so that $\nu(T,V')>0$ and $V \subset V'$, then $V'=V$. Such a $V$ is called \emph{a  component of Lelong upper-level set} of $T$ on $W$.% Note that $W \not \in \cali{V}_{T,W}$.

Let $\omega$ be a fixed K\"ahler form on $X$.  We denote by $\|T\|$ the mass norm on $T$ given by $\|T\|:= \int_X T \wedge \omega^{n-p},$  
where $T$ is of bi-degree $(p,p)$. Similarly we define $\vol(V):= \int_V \omega^{\dim V}$ for every irreducible analytic subset $V$ in $X$. Usually the volume of an analytic set involves a positive multiplicative dimensional constant. Since the dimensions of analytic sets in consideration are bounded by $n= \dim X$, we opt to define $\vol(V)$ as above so that the volume of $V$ is simply equal to the mass norm of the current of integration along $V$. This is only for a practical purpose and is not essential.     

\begin{theorem} \label{the-self-inters}  There exists a constant $c>0$ independent of $T$ and $W$ such that 
\begin{align}\label{ine-dimVlself-interdesire}
 \sum_{V \in \cali{V}_{T,W}} \big(\nu(T,V)- \nu(T,W)\big)^{m-\dim V} \vol(V) \le c \vol(W)\|T\| (1+\|T\|)^{m-1}.
 \end{align}
%where $\delta:= \inf_{x \in V} \nu(P,x)$ which is a strictly positive number.
\end{theorem}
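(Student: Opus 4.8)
The core idea is to reduce the general estimate to the situation where $W = X$ and where we can work with a single closed positive $(1,1)$-current and induct on dimension. First I would localize: by restricting to a neighborhood of a generic point of each $V \in \cali{V}_{T,W}$ and using a partition of unity subordinate to a finite covering of $W$ by coordinate charts (noting $\cali{V}_{T,W}$ is at most countable, and only finitely many members can have volume bounded below by any fixed $\varepsilon$, since the sum of volumes is controlled), it suffices to bound the contribution of finitely many components at a time. The key technical device should be the Dinh--Sibony density current / tangent current machinery together with the Lelong filtration: restricting $T$ to $W$ in the sense of currents is not automatic, so I would instead work with $[W]$ and form the density current of $T$ with respect to the diagonal of $W \times W$, which encodes $\nu(T,V)$ for $V \subset W$ as Lelong numbers along the fibers of the normal bundle $N_{V|W}$.

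\smallskip

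Next I would set up the induction on $m = \dim W$. For the base case and the inductive step, the mechanism is: pick $V \in \cali{V}_{T,W}$, blow up $W$ (or pass to the normal cone) along $V$, and relate $\bigl(\nu(T,V) - \nu(T,W)\bigr)^{m - \dim V}\vol(V)$ to a mass of an auxiliary current on the exceptional divisor. Concretely, writing $d := \dim V$, the quantity $\bigl(\nu(T,V)-\nu(T,W)\bigr)^{m-d}\vol(V)$ should be recognized as the mass (up to a universal constant) of the component along $\P(N_{V|W})$ of the Dinh--Sibony self-intersection of the density current, using that the generic Lelong number behaves like an exponent of a power of $dd^c\log$ of the distance to $V$. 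Summing a Siu-type semicontinuity / King-type formula over all $V$ then produces $\sum_V (\cdots) \le \| \text{(density current)} \| \cdot \vol(W)$, and the mass of the density current is bounded by $\|T\|(1 + \|T\|)^{m-1}$ — the power $(1+\|T\|)^{m-1}$ is exactly what one pays for taking an $(m-1)$-fold Dinh--Sibony product/self-intersection, each factor contributing a linear-in-mass bound via a Chern--Levine--Nirenberg-type inequality on the compact K\"ahler manifold.

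\smallskip

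The main obstacle, I expect, is controlling the Dinh--Sibony self-intersection and proving it is well-defined with the stated mass bound: the density current need not be of the right bidimension for the self-product to exist in the naive sense, so one must verify the technical hypotheses (e.g. that the relevant currents are $h$-regular, or that the wedge products along the diagonal are admissible in the sense of Dinh--Sibony), and then track the constant through each step so that it remains independent of $T$ and $W$. A secondary difficulty is passing from the local (chart-by-chart) bound to the global one while ensuring the constant $c$ does not depend on the covering — this requires that the charts can be chosen with bounded geometry, which is where compactness of $X$ enters, and that the overlap corrections are absorbed into $\vol(W)\|T\|$. Once the self-intersection is under control, subtracting $\nu(T,W)$ (the generic Lelong number along all of $W$) is a routine normalization: one replaces $T$ by $T - \nu(T,W)[W]$ on $W$ in the appropriate sense, which only decreases the mass, and then applies the estimate for currents with vanishing generic Lelong number along $W$.
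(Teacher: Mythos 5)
Your high-level instinct that Dinh--Sibony density currents provide the mass bound is correct, but your proposed route diverges from the paper's in several essential places, and at least one step would fail.

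The most serious gap is the normalization step. You propose to ``replace $T$ by $T - \nu(T,W)[W]$ on $W$'', but $[W]$ is a current of bi-degree $(n-m,n-m)$ while $T$ is of bi-degree $(1,1)$, so this difference is meaningless unless $W$ is a divisor; and even when $W$ is a divisor, positivity of the difference is a Siu-decomposition statement, not a ``routine normalization''. The paper instead invokes Demailly's regularization theorem (\cite[Theorem 1.1]{Demailly_regula_11current}) to produce a closed positive $(1,1)$-current $T'$ with $\nu(T',x) = \max\{\nu(T,x)-\nu(T,W), 0\}$ and $\|T'\| \le M\|T\|$; this works for $W$ of any codimension and is the only way the relative Lelong number $\nu(T,V) - \nu(T,W)$ enters the estimate. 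Nothing in your proposal replaces this ingredient.

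A second problem is that you want to form density currents ``with respect to the diagonal of $W \times W$''. Tangent/density currents along a singular analytic set are not available, and $W$ is typically singular. The paper sidesteps this: for each dimension $l$ of component it works in $X^{m-l+1}$ along the smooth diagonal $\Delta_{m-l+1}$, taking the density current of $(m-l)$ copies of $T$ together with $[W]$, and then uses the compatibility result \cite[Corollary 3.11]{VietTuanLucas} (Theorem~\ref{th-density11agreewithclassical}) to identify the density current with the classical Bedford--Taylor product $T^{m-l}\wedge[W]$ near generic points of the components. No induction on $\dim W$, no blow-ups along $V$, and no partition of unity are used; instead, the finiteness needed to make the wedge product classically well-defined is obtained by first reducing to currents $T$ with analytic singularities via Demailly's analytic approximation (\cite[Corollary 14.13]{Demailly_analyticmethod}), a reduction your proposal omits entirely. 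The localization-plus-cutoff strategy you describe is dangerous here because cutoffs destroy closedness and positivity, and because the constant would then depend on a covering of $W$, which varies with $W$; the paper's argument is genuinely global and the only localization used is passing to a neighborhood of the regular locus to justify the Bedford--Taylor product, not to patch estimates.

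Finally, you should note the preliminary reduction (via Vigny's Lelong--Skoda transform \cite[Theorem 3.1]{Vigny-LelongSkoda}) from general bi-degree $(p,p)$ to $(1,1)$: without it one cannot speak of ``classical'' wedge products $T^{m-l}\wedge[W]$ at all. So while your outline correctly identifies density currents and a Chern--Levine--Nirenberg-type mass bound as the engine, the concrete mechanism you propose (diagonal of $W\times W$, induction on $m$, blow-up along $V$, subtracting $\nu(T,W)[W]$) either fails outright or is replaced in the paper by the trio of Vigny's reduction, Demailly approximation to analytic singularities, and Demailly's Lelong-cutting regularization.
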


%We note that for $V \in \cali{V}_{T,R}$ of dimension $m$, it is not possible to bound from above $\vol(V)$ by a uniform constant times $\|R\|\|T\| (1+\|T\|)^{m-1}$. To see it, just consider $R=T= \epsilon [W]$, where $[W]$ is the current of integration along an irreducible hypersurface $W$ in $X$, and $\epsilon$ is a small constant.

The constant $c$ in Theorem \ref{the-self-inters} depends only on $X$ and $\omega$ but it is non-explicit: it comes from upper bounds for density currents (see Theorem \ref{th-dieukienHVconictangenetcurrent} below).

In Theorem \ref{the-self-inters}, it is necessary to consider the ``relative"  generic Lelong number $\nu(T,V)- \nu(T,W)$ of $T$ along $V$ in $W$ in place of $\nu(T,V)$, see Example \ref{ex-phaidungrelative} in the end of the paper. 

The irreducibility of $W$ is not absolutely necessary. Nevertheless if $W$ is not irreducible, the inequality (\ref{ine-dimVlself-interdesire}) must be modified a bit to take into account the generic Lelong number of $T$ along irreducible components of $W$. 
Theorem \ref{the-self-inters}  gives in particular an upper bound for the volume of a maximal irreducible analytic subset of $W$ on which $T$ has a strictly positive ``relative" generic Lelong number. 

Let $T$ be of bi-degree $(p,p)$. If $p=0$, then $T$ is a constant function, and in this case, Theorem \ref{the-self-inters} is clear because $\cali{V}_{T,W}$ is empty. If $p=n$, then $T$ is a measure on $X$ and the only maximal Lelong level sets for $T$ are points where $T$ has positive mass. In this case, the desired inequality (\ref{ine-dimVlself-interdesire}) follows from the fact that $\vol(W)$ is bounded below by a constant $c_0>0$ independent of $W$ (see Lemma \ref{le-lowerboundvolumeana} below) and the elementary inequality 
$$\sum_{j=1}^\infty a_j^m \le \big(\sum_{j=1}^\infty a_j\big)\big(1+ \sum_{j=1}^\infty a_j\big)^{m-1}$$
for positive numbers $a_j$. Thus the non-trivial case is when $1\le p \le n-1$. Since a current of bi-degree $(p,p)$ with $p \ge 1$ on $X$ has zero generic Lelong number along $X$, by taking $W:=X$ in Theorem \ref{the-self-inters}, one obtains 

\begin{corollary} \label{cor-apdungRbangT} Let $T$ be a closed positive current of bi-degree $(p,p)$ with $p \ge 1$ on $X$. Let $\cali{V}_T$ be the set of maximal irreducible analytic subsets $V$ in $X$ along which the generic Lelong number of $T$ is strictly positive. Then we have 
$$\sum_{V \in \cali{V}_{T}} \big(\nu(T,V)\big)^{n-\dim V} \vol(V) \le c \|T\| (1+\|T\|)^{n-1},$$
for some constant $c>0$ independent of $T$. 
\end{corollary}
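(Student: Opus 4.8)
The plan is to derive Corollary \ref{cor-apdungRbangT} by a direct application of Theorem \ref{the-self-inters} with the choice $W := X$. The first observation is that $X$ is itself an irreducible analytic subset of $X$ of dimension $m = n$, so it is a legitimate choice for $W$. The second observation, already noted in the text of the excerpt, is that a closed positive current $T$ of bi-degree $(p,p)$ with $p \ge 1$ has vanishing generic Lelong number along $X$, that is $\nu(T,X) = 0$. This is because the generic Lelong number $\nu(T,X)$ is computed at a generic point of $X$, where $T$ is a smooth form (by the standard fact that a closed positive current is smooth on a dense Zariski-open set, or more elementarily because its Lelong numbers vanish outside a countable union of proper analytic subsets), and smooth forms have zero Lelong number. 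Hence $\nu(T,V) - \nu(T,X) = \nu(T,V)$ for every proper irreducible analytic subset $V$ of $X$.

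With these two facts in hand, unwinding the definitions shows that $\cali{V}_{T,X}$, the set of proper irreducible analytic subsets $V \subset X$ with $\nu(T,V) > 0$ that are maximal with respect to inclusion among such sets, coincides exactly with $\cali{V}_T$ as defined in the statement of the corollary. Indeed, both are the collection of maximal members (under inclusion) of the family $\{V \subsetneq X \text{ irreducible analytic} : \nu(T,V) > 0\}$; note that any irreducible analytic subset with positive generic Lelong number is automatically proper in $X$, again because $\nu(T,X) = 0$, so the restriction to \emph{proper} subsets in the definition of $\cali{V}_{T,W}$ imposes nothing extra here. Therefore the left-hand side of (\ref{ine-dimVlself-interdesire}) with $W = X$ and $m = n$ becomes precisely
\begin{align*}
\sum_{V \in \cali{V}_{T,X}} \big(\nu(T,V) - \nu(T,X)\big)^{n - \dim V} \vol(V) = \sum_{V \in \cali{V}_T} \big(\nu(T,V)\big)^{n - \dim V} \vol(V),
\end{align*}
while the right-hand side becomes $c\, \vol(X)\, \|T\|(1 + \|T\|)^{n-1}$.

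Finally, $\vol(X) = \int_X \omega^n$ is a fixed positive constant depending only on $(X,\omega)$ and not on $T$; absorbing it into $c$ yields the asserted inequality $\sum_{V \in \cali{V}_T}(\nu(T,V))^{n - \dim V}\vol(V) \le c\,\|T\|(1 + \|T\|)^{n-1}$ with a new constant $c > 0$ independent of $T$. There is no real obstacle here: the only point requiring a word of justification is the identity $\nu(T,X) = 0$ for $p \ge 1$, which is standard, and the matching of the two index sets $\cali{V}_{T,X}$ and $\cali{V}_T$, which is a direct consequence of that identity. The content of the corollary is entirely carried by Theorem \ref{the-self-inters}; the derivation above is a specialization and a renaming of the constant.
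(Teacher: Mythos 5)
Your proposal is correct and is exactly the paper's own derivation: the authors obtain Corollary \ref{cor-apdungRbangT} by setting $W:=X$ in Theorem \ref{the-self-inters} and using that $\nu(T,X)=0$ for $p\ge 1$, absorbing $\vol(X)$ into the constant. Your additional remarks on why $\nu(T,X)=0$ and why $\cali{V}_{T,X}=\cali{V}_T$ are accurate fillings-in of details the paper leaves implicit.
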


By \cite{Vigny-LelongSkoda}, for every closed positive current  $T$ of bi-degree $(p,p)$ with $1 \le p \le n-1$, there exists a closed positive current of bi-degree $(1,1)$ whose Lelong numbers are equal to those of $T$ everywhere. Hence it makes no difference in the above results if we assume that $T$ is of bi-degree $(1,1)$.  Thus Corollary \ref{cor-apdungRbangT} follows indeed from \cite[Theorem 4.1]{Vu_lelong-bignef-quantitative}, see Remark \ref{re-lelongbignef} below. %Theorem \ref{the-self-inters} could be probably proved using ideas from \cite{Vu_lelong-bignef-quantitative}. But the point here is that we give a short proof of that result without using the relative non-pluripolar product of currents. %, .   
%In this paper our goal is to present a simple proof of the above inequality and give an application to Nevanlinna's theory. 

Corollary \ref{cor-apdungRbangT} is related to  Demailly's self-intersection inequality (\cite[Theorem 1.7]{Demailly_regula_11current}) and other higher bi-degree versions of it in \cite{Meo-auto-inter,Parra,Vigny-LelongSkoda} . If one applies directly \cite[Theorem 1.7]{Demailly_regula_11current} or results in \cite{Meo-auto-inter,Parra,Vigny-LelongSkoda}, then one only gets an upper bound for the volume of maximal Lelong sets of maximal dimension. Precisely, let $k:= \max_{V \in \cali{V}_T} \dim V$ and let $\cali{V}'_{T}$ be the subset of $\cali{V}_T$ containing $V$ of dimension $k$. By these last references, there holds
$$\sum_{V \in \cali{V}'_{T}} \big(\nu(T,V)\big)^{n-k} \vol(V) \le c\|T\|^{n-k},$$
for some constant $c>0$ independent of $T$. 
However \cite[Theorem 1.7]{Demailly_regula_11current} is not sufficient to treat maximal Lelong level sets which are not of maximal dimension $k$ because 
these sets are not read in the process of considering jumping values of  Lelong level sets if their Lelong numbers are less than or equal to $\max_{V\in \cali{V}'_T} \nu(T,V)$, see Remark \ref{re-demailly-self} at the end of the paper for an example illustrating this issue.  Hence the novelty in Corollary \ref{cor-apdungRbangT} (and Theorem \ref{the-self-inters}) is that it bounds the volume of every maximal Lelong level set, see also Corollary \ref{cor-volumesingularset} below.

 %We note that by \cite{Vigny-LelongSkoda}, there is always a closed positive $(1,1)$-current whose Lelong numbers are exactly those of $T$. Thus it suffices to consider $T$ of bi-degree $(1,1)$. 

Let $W$ be analytic set in $X$. We don't require $W$ to be irreducible.  We define a sequence of analytic subsets as follows. Let $W_0:=W$ and $W_j$ to be the singular locus of $W_{j-1}$ for $j \ge 1$. The process ends at a finite time when $W_j$ is smooth because of a dimension reason. Hence we obtain a finite sequence $(W_j)_{1 \le j \le k_W}$. Note $k_W \le \dim W$.  We call $(W_j)_{1 \le j \le k_W}$ the \emph{singularity filtration} of $W$.

Let $T:= [W]$ be the current of integration along $W$. Recall that $\nu(T,x)$ is the multiplicity of $W$ at $x$ for every $x \in X$, and $\nu(T,x)= 1$ if $x \in W_0 \backslash W_1$ and $\nu(T,x) \ge 2$ if $x \in W_1$ (see \cite[Page 120]{chirka}). The analytic set $W_j$ might not be irreducible and hence can contain several irreducible components of different dimensions. Write $W_j= \cup_{k=1}^l W_{jk}$ which is the decomposition of $W_j$ into irreducible components. We define 
$$\vol(W_j):= \sum_{k=1}^l \vol(W_{jk})= \sum_{k=1}^l \int_{W_{jk}} \omega^{\dim W_{jk}}.$$
As an application of Theorem \ref{the-self-inters}, we obtain

%KIEM TRA XEM DIEU KIEN KHA TICH CUA W CO CAN THIET HAY KHONG

\begin{corollary}\label{cor-volumesingularset} There exists a constant $c>0$ independent of $W$ such that 
\begin{align}\label{ine-vol-sing}
\vol(W_1) \le c \vol(W)(1+ \vol(W))^{m-1}
\end{align}
and 
\begin{align} \label{ine-vol-sing2}
\sum_{j=1}^{k_W} \vol(W_j) \le c \vol(W)(1+ \vol(W))^{(m+2)^m},
\end{align}
where $m:= \dim W$.
\end{corollary}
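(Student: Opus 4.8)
The plan is to derive \eqref{ine-vol-sing} directly from Theorem~\ref{the-self-inters} applied to the current of integration $T:=[W]$, and then to derive \eqref{ine-vol-sing2} by iterating \eqref{ine-vol-sing} along the singularity filtration of $W$. Recall that $\|[W]\|=\vol(W)$, and that $\nu([W],V)$, for an irreducible $V\subseteq W$, equals the generic multiplicity of $W$ along $V$: it is $1$ when a generic point of $V$ is a regular point of $W$, and, since the multiplicity of $W$ at a singular point is at least $2$, it is at least $2$ when a generic point of $V$ is a singular point of $W$.

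For \eqref{ine-vol-sing}, since Theorem~\ref{the-self-inters} is stated for an irreducible ambient set, I first reduce to that case. Let $W=\bigcup_i W^{(i)}$ be the decomposition into irreducible components, $m_i:=\dim W^{(i)}\le m$. Since $W_1=\Sing(W)\subseteq W$, we have $W_1=\bigcup_i\big(\Sing(W)\cap W^{(i)}\big)$, and every irreducible component of $W_1$ is an irreducible component of $\Sing(W)\cap W^{(i)}$ for some $i$, so $\vol(W_1)\le\sum_i\vol\big(\Sing(W)\cap W^{(i)}\big)$. Now fix $i$ and apply Theorem~\ref{the-self-inters} with ambient set $W^{(i)}$ and current $T:=[W]$, the current of integration along the \emph{whole} of $W$ (this is needed so that the Lelong number detects the crossings $W^{(i)}\cap W^{(j)}$, along which $W^{(i)}$ may itself be smooth). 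A generic point of $W^{(i)}$ is a regular point of $W$, so $\nu([W],W^{(i)})=1$; and for an irreducible $V\subsetneq W^{(i)}$ the relative Lelong number $\nu([W],V)-1$ is positive exactly when a generic point of $V$ is a singular point of $W$, i.e.\ exactly when $V\subseteq\Sing(W)\cap W^{(i)}$. By the maximality built into the definition of $\cali{V}_{[W],W^{(i)}}$, the sets $V$ contributing to the left-hand side of \eqref{ine-dimVlself-interdesire} are therefore precisely the irreducible components of $\Sing(W)\cap W^{(i)}$; each of these has $\dim V\le m_i-1$ and $\nu([W],V)-1$ a positive integer, so the weight $\big(\nu([W],V)-1\big)^{m_i-\dim V}$ is $\ge 1$. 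Hence Theorem~\ref{the-self-inters} gives $\vol\big(\Sing(W)\cap W^{(i)}\big)\le c\,\vol(W^{(i)})\,\|[W]\|\,(1+\|[W]\|)^{m_i-1}$, and summing over $i$ with $\|[W]\|=\vol(W)$, $m_i\le m$, $\sum_i\vol(W^{(i)})=\vol(W)$ yields \eqref{ine-vol-sing}.

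For \eqref{ine-vol-sing2}, set $v_j:=\vol(W_j)$ and $d_j:=\dim W_j$, so $d_0=m$, the $d_j$ strictly decrease, $k_W\le m$, and $\Sing(W_j)=W_{j+1}$. Applying \eqref{ine-vol-sing} with $W_j$ in place of $W$ gives $v_{j+1}\le c\,v_j(1+v_j)^{d_j-1}\le c\,v_j(1+v_j)^{m-1}$, hence $1+v_{j+1}\le(1+c)(1+v_j)^m$. Iterating this yields, for every $1\le j\le k_W$, a bound $1+v_j\le C\,(1+v_0)^{m^m}$ with a constant $C$ depending only on $c$ and $m$, hence only on $X$ and $\omega$. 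Summing over $1\le j\le k_W\le m$ and using $m^m\le(m+2)^m$ gives $\sum_{j=1}^{k_W}v_j\le m\,C\,(1+v_0)^{(m+2)^m}$, and since $v_0=\vol(W)$ is bounded below by the universal constant $c_0>0$ of Lemma~\ref{le-lowerboundvolumeana}, the factor $m\,C$ can be absorbed into $(mC/c_0)\,\vol(W)$, giving \eqref{ine-vol-sing2}.

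All the analytic content lies in Theorem~\ref{the-self-inters}; the rest is bookkeeping, and the step deserving the most care is the identification of the contributing members of $\cali{V}_{[W],W^{(i)}}$ with the irreducible components of $\Sing(W)\cap W^{(i)}$. This uses that the generic multiplicity of $W$ is $1$ on $\Reg(W)$ and at least $2$ on $\Sing(W)$, that an irreducible set along which $W$ has generic multiplicity at least $2$ lies inside $\Sing(W)$, and the maximality clause in the definition of $\cali{V}_{[W],W^{(i)}}$; after that the integrality of the Lelong numbers renders the weights $\big(\nu([W],V)-1\big)^{m_i-\dim V}$ harmless. I would expect the reducibility reduction — in particular checking that the crossings $W^{(i)}\cap W^{(j)}$ are all captured, which holds since $W^{(i)}\cap W^{(j)}\subseteq\Sing(W)\cap W^{(i)}$ — and the exponent bookkeeping in the iteration to be the likeliest places for a slip, though neither is genuinely hard.
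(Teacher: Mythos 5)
Your proof follows the same route as the paper's: take $T:=[W]$, note that $\nu([W],\cdot)$ is the multiplicity so the relative Lelong numbers along components of the singular locus are integers $\ge 1$, invoke Theorem~\ref{the-self-inters}, and iterate along the singularity filtration. Two remarks.

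First, you are more careful than the paper about the reducible case, and rightly so. The paper disposes of it with the one-line ``apply the previous case to each irreducible component,'' but if one reads that as applying the irreducible case with $T:=[W^{(i)}]$ to the ambient $W^{(i)}$, then the crossings $W^{(i)}\cap W^{(j)}$ (which lie in $\Sing(W)$ even when each $W^{(i)}$ is smooth) are invisible. Your choice of $T:=[W]$ with ambient $W^{(i)}$, and the observation that $\nu([W],W^{(i)})=1$ while $\nu([W],V)\ge 2$ exactly for $V\subseteq \Sing(W)\cap W^{(i)}$, closes that gap cleanly. One small technicality you should record: when the components $W^{(i)}$ have different dimensions, $[W]$ is not of pure bidegree, so strictly speaking one should first pass to a bidegree-$(1,1)$ current with the same Lelong numbers (e.g.\ via the Lelong--Skoda transform used in \cite{Vigny-LelongSkoda}, as at the start of the paper's proof of Theorem~\ref{the-self-inters}) before invoking that theorem.

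Second, there is a shared off-by-one in the exponent. Applying Theorem~\ref{the-self-inters} with $T=[W]$ produces the right-hand side $c\,\vol(W)\,\|[W]\|(1+\|[W]\|)^{m-1}=c\,\vol(W)^2(1+\vol(W))^{m-1}$, which absorbs into $c\,\vol(W)(1+\vol(W))^{m}$ but not into $c\,\vol(W)(1+\vol(W))^{m-1}$ as stated in~\eqref{ine-vol-sing}. This is present in the paper's own derivation as well, so it is not a defect of your argument specifically; and it is harmless for~\eqref{ine-vol-sing2}, since your recursion then gives $1+v_{j+1}\le(1+c)(1+v_j)^{m+1}$ and hence $1+v_j\le C(1+v_0)^{(m+1)^{m}}$, still dominated by $(m+2)^m$. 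Your iteration is a bit simpler than the paper's two-index induction ($\vol(W_s)\lesssim\vol(W_l)(1+\vol(W_l))^{(m-l+2)^{s-l}}$) and uses Lemma~\ref{le-lowerboundvolumeana} to convert the additive constant into a multiplicative $\vol(W)$, which the paper's form of the induction avoids needing; both are fine.
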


%We are not aware of any previous result giving an upper bound of the volume of the singularity of an analytic set in terms of its volume. 

If one uses \cite[Theorem 1.7]{Demailly_regula_11current}, then one only obtains a similar upper bound for the sum of volumes of irreducible components of $W_1$ of maximal dimension. The reason as explained above is that some maximal Lelong level sets are not taken into account in the sequence of jumping numbers for the Lelong level sets $E_c$ in \cite[Theorem 1.7]{Demailly_regula_11current}.

We can refine Corollary \ref{cor-apdungRbangT} a bit more as follows.   \emph{A Lelong filtration associated to $T$} is a finite sequence of (non-empty) irreducible analytic subsets $\mathbf{V}:=(V_j)_{0 \le j \le l}$ in $X$ such that $V_0 \subsetneq V_1 \subsetneq \cdots \subsetneq V_l$, and for $1 \le j \le l+1$, $V_{j-1}$ is a maximal irreducible analytic subset of $T$ on  $V_j$ such that $\nu(T,V_{j-1})> \nu(T,V_j)$, where $V_{l+1}:= X$. The number $l$ is called \emph{the length of the filtration $\mathbf{V}$}.  For each Lelong filtration $\mathbf{V}$, we put
$$\vol(\mathbf{V}):= \vol(V_0) \prod_{s=0}^l \big(\nu(T,V_s)- \nu(T,V_{s+1})\big)^{\dim V_{s+1}- \dim V_s} .$$ 
A Lelong filtration $\mathbf{V}= (V_j)_{1 \le j \le m}$ is \emph{maximal} if $\nu(T,V_0)= \nu(T,x)$ for every $x \in V_0$, in other words, one can not add one more analytic subset to $\mathbf{V}$ to make it longer.  Let $\cali{F}_T$ be the set of Lelong filtrations of $T$. 

%CHUNG MINH KHONG CHINH XAC VI AP DUNG CHO W MA SO LELONG CUA T DOC THEO W DUONG. Y TUONG: PHAI DUNG CA DINH LY CUT OFF SO LELONG CUA DEMAILLY DE LOAI TRU TRUONG HOP SO LELONG CUA T DOC THEO W DUONG.

\begin{theorem} \label{theo-apdungRbangT} Let $T$ be a closed positive current of bi-degree $(p,p)$ with $p \ge 1$ on $X$. Then we have 
$$\sum_{\mathbf{V} \in \cali{F}_{T}} \vol(\mathbf{V}) \le c \|T\| (1+\|T\|)^{n^2},$$
for some constant $c>0$ independent of $T$. 
\end{theorem}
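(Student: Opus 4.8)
The plan is to deduce this from Theorem \ref{the-self-inters} by an induction over the length of Lelong filtrations, summing the estimates along the way. First I would organize the set $\cali{F}_T$ of Lelong filtrations according to their top member. Fix an irreducible analytic set $W \subsetneq X$ appearing as some $V_l$ (or set $W = X$ in the degenerate case $l = -1$, i.e.\ the empty filtration, which contributes $\vol(\emptyset)$-type terms that are harmless). Every Lelong filtration $\mathbf{V} = (V_0 \subsetneq \cdots \subsetneq V_l)$ with $V_l$ a component of the Lelong upper-level set of $T$ on $X$ restricts, by deleting $V_l$, to a Lelong filtration of $T$ inside $W = V_l$ (here one uses that being a \emph{maximal} irreducible analytic subset with larger relative Lelong number is a transitive/nested notion, so that $(V_0 \subsetneq \cdots \subsetneq V_{l-1})$ is indeed admissible inside $V_l$). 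Thus I would set up, for each $W$, the quantity
$$S(W) := \sum_{\mathbf{V}:\, V_l = W} \vol(V_0) \prod_{s=0}^{l} \big(\nu(T,V_s) - \nu(T,V_{s+1})\big)^{\dim V_{s+1} - \dim V_s},$$
and derive a recursive bound for $\sum_{W \in \cali{V}_{T}} S(W)$ in terms of data one dimension up, peeling off one layer at a time.

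The key step is a single-layer estimate: for a fixed irreducible $W$ of dimension $m$, Theorem \ref{the-self-inters} gives
$$\sum_{V \in \cali{V}_{T,W}} \big(\nu(T,V) - \nu(T,W)\big)^{m - \dim V}\, \vol(V) \le c\, \vol(W)\, \|T\| (1+\|T\|)^{m-1}.$$
Iterating this: at the bottom level the innermost factor $\big(\nu(T,V_0)-\nu(T,V_1)\big)^{\dim V_1 - \dim V_0}\vol(V_0)$ summed over $V_0 \in \cali{V}_{T,V_1}$ is, by Theorem \ref{the-self-inters}, at most $c\, \vol(V_1)\,\|T\|(1+\|T\|)^{\dim V_1 - 1}$. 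Substituting this back, the sum over $V_1$ of $\big(\nu(T,V_1)-\nu(T,V_2)\big)^{\dim V_2 - \dim V_1}\vol(V_1)$ is again controlled by Theorem \ref{the-self-inters} applied on $V_2$, and so on up the chain. Since a filtration has length $l \le n$ (dimensions strictly increase), one accumulates at most $n$ factors of $c$ and of $\|T\|(1+\|T\|)^{(\cdot) - 1}$; multiplying these and crudely bounding each exponent $\dim V_{s+1} - 1 \le n$ yields a total of the form $c^n \|T\|^n (1+\|T\|)^{n(n-1)} \le c' \|T\|(1+\|T\|)^{n^2 - 1}$ after absorbing $\|T\|^{n-1} \le (1+\|T\|)^{n-1}$, which is of the required shape $c\|T\|(1+\|T\|)^{n^2}$.

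The main obstacle, and the point requiring care, is the bookkeeping of the exponents and of the telescoping of the relative Lelong numbers: Theorem \ref{the-self-inters} naturally produces the factor $\big(\nu(T,V) - \nu(T,W)\big)^{m - \dim V}$ with exponent $m - \dim V = \dim W - \dim V$, whereas the definition of $\vol(\mathbf{V})$ uses the exponent $\dim V_{s+1} - \dim V_s$ between \emph{consecutive} members of the filtration. One must check that the product $\prod_s \big(\nu(T,V_s) - \nu(T,V_{s+1})\big)^{\dim V_{s+1}-\dim V_s}$ is dominated (up to a combinatorial constant depending only on $n$) by a product of the single-layer quantities $\big(\nu(T,V_s) - \nu(T,V_{s+1})\big)^{\dim V_{s+1}-\dim V_s}$ that appear when Theorem \ref{the-self-inters} is applied with $W = V_{s+1}$; since these are literally the same factors, the issue is really just that at each application of Theorem \ref{the-self-inters} one sums over \emph{all} of $\cali{V}_{T,V_{s+1}}$ while only the sub-collection that extends to a longer filtration contributes to $\cali{F}_T$, so positivity of all terms lets one enlarge the sum freely. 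A secondary obstacle is handling the degenerate/maximal cases (filtrations of length $0$, and the role of $V_{l+1} = X$ with $\nu(T,X) = 0$), but these are absorbed into the induction by treating $X$ as the formal top of every maximal filtration and invoking Corollary \ref{cor-apdungRbangT} as the base case of the outer induction.
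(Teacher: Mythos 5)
Your proposal is correct and follows essentially the same route as the paper: the paper also organizes filtrations by their members near the bottom (it defines the map $\mathcal{C}_W$ deleting $V_0$, whose fibers are identified with $\cali{V}_{T,W}$), applies Theorem \ref{the-self-inters} to bound the resulting single-layer sum by $c\,\vol(W)\,\|T\|(1+\|T\|)^{\dim W-1}\le \vol(W)(1+\|T\|)^{n}$, and then runs an induction on the filtration length $l\le n-1$ with Corollary \ref{cor-apdungRbangT} as the base case. Your exponent bookkeeping (at most $n$ layers, each contributing a factor $\|T\|(1+\|T\|)^{\le n-1}$, totaling $\|T\|(1+\|T\|)^{n^2-1}$ after absorbing the extra powers of $\|T\|$) is the same computation the paper carries out.
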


Since the self-intersection of Demailly mentioned above has found applications in the study of equidistribution of pre-images of subvarieties in complex dynamics (see \cite{DinhSibony-convergence-Green,Taflin}), we expect that Theorem \ref{the-self-inters} could be also useful  for this question. Corollary \ref{cor-volumesingularset} is by the way of independent interest in its own right.

The difficulty in proving Theorem \ref{the-self-inters} is that the intersection of copies of $T$ with the current of integration along $W$ is not well-defined in the classical sense. To solve this problem, we need to use both the analytic regularisation of psh functions by Demailly (\cite{Demailly_regula_11current}) and the theory of density currents by Dinh-Sibony (\cite{Dinh_Sibony_density}). The fact that the notion of density currents generalizes the classical intersection of currents of bi-degree $(1,1)$ (see \cite{VietTuanLucas}) is also crucial in our proof.

We suspect that the inequality in Corollary \ref{cor-apdungRbangT} is more or less the best possible relation between  $\nu(T,V)$ for maximal $V$ if we don't take into account the cohomology class of $T$. More precisely we would like to pose the following question. 

\begin{problem} \label{problem-nguoc}  Let $(V_j)_{j \in \N}$ be a countable family of irreducible analytic subsets in $X$ together with a sequence of strictly positive real numbers $(\lambda_j)_{j \in \N}$ such that  
$V_j \not \subset V_{j'}$ for every $j \not = j'$, and 
$$\sum_{j=1}^\infty \lambda_j^{n- \dim V_j} \vol(V_j)< \infty.$$
Does there exist a closed positive current $T$ of bi-degree $(1,1)$ such that 
$\nu(T,V_j) \ge \lambda_j$ and $V_j$ is a maximal analytic subset along which $T$ has strictly positive generic Lelong number for every $j$?
Furthermore, if  $\alpha$ is a pseudoeffective $(1,1)$-class, under which additional assumptions on $(V_j,\lambda_j)$ there exists  a closed positive current $T$ in $\alpha$ so that $T$ satisfies the above property? 
\end{problem}

The problem was solved in \cite[Corollary 6.8]{Demailly-numerical-criterion} in the case where $\alpha$ is big and nef, and $(V_j)_{j \in \N}$ is a countable family of points in $X$. The necessary and sufficient condition in this case is 
$$\sum_{j=1}^\infty \lambda_j^n \le \int_X\alpha^n.$$

Problem \ref{problem-nguoc} can be put in a  broader context as follows. Consider a big cohomology $(1,1)$-class $\alpha$ and an analytic subset $V$ in $X$. One wants to study the space of currents in $\alpha$ having controlled singularities along $V$ (in some reasonable ways). Such a question is relevant in the theory of complex Monge-Amp\`ere equations big cohomology classes where this equation was solved with given prescribed singularities (see \cite{Darvas-Lu-DiNezza-singularity-metric,Lu-Darvas-DiNezza-logconcave,Lu-Darvas-DiNezza-mono,Vu_Do-MA}).    We refer also to \cite{Coman-Marinescu-Nguyen,Darvas-Xia} for some results around this topic in the case where the class is integral.   
\\

\noindent
\textbf{Acknowledgement.} We thank Tien-Cuong Dinh and Gabriel Vigny for fruitful discussions. The research of D.-V. Vu is partly funded by the Deutsche Forschungsgemeinschaft (DFG, German Research Foundation)-Projektnummer 500055552. The research of Do Duc Thai is supported by the ministry-level project B2022 - SPH - 03.

\section{Proofs of Theorem \ref{the-self-inters} and its consequences} \label{sec-maximalLelong}

We first recall some basic properties of density currents. The last notion was introduced in \cite{Dinh_Sibony_density}. %Most of materials below is taken from the last paper. Readers can also consult \cite{Vu_density-nonkahler} for some simplifications. 

Let $X$ be a complex K\"ahler manifold of dimension $n$ and $V$ a smooth complex submanifold of $X$ of dimension $l.$  Let $T$ be a closed positive $(p,p)$-current on $X,$ where $0 \le p \le n.$  Denote by $\pi: E\to V$ the normal bundle of $V$ in $X$ and $\overline E:= \P(E \oplus \C)$ the projective compactification of $E.$ By abuse of notation, we also use $\pi$ to denote the natural projection from $\overline E$ to $V$. 

Let $U$ be an open subset of $X$ with $U \cap V \not = \varnothing.$  Let $\tau$ be  a smooth diffeomorphism  from $U$ to an open neighborhood of $V\cap U$ in $E$ which is identity on $V\cap U$ such that  the restriction of its differential $d\tau$ to $E|_{V \cap U}$ is identity.  Such a map is called \emph{an almost-admissible map}.  When $U$ is a small enough tubular neighborhood of $V,$ there always exists an almost-admissible map $\tau$ by \cite[Lemma 4.2]{Dinh_Sibony_density}. In general, $\tau$ is not holomorphic.  When $U$ is a small enough local chart, we can choose a holomorphic almost-admissible map by using suitable holomorphic coordinates on $U$.   For $\lambda \in \C^*,$ let $A_\lambda: E \to E$ be the multiplication by $\lambda$ on fibers of $E.$ 

\begin{theorem} \label{th-dieukienHVconictangenetcurrent} (\cite[Theorem 4.6]{Dinh_Sibony_density})
Let $\tau$ be an almost-admissible map defined on a tubular neighborhood of $V$. Then,  the family $(A_\lambda)_* \tau_* T$ is of mass bounded uniformly in $\lambda$ on compact sets, and  if $S$ is a limit current of the last family as $\lambda \to \infty$, then  $S$ is a current on $E$ which can be extended trivially through $\overline E \backslash E$ to  a closed positive current on $\overline E$  such that the cohomology class $\{S\}$ of $S$ in $\overline E$ is independent of the choice of $S$, and $\{S\}|_V= \{T\}|_V$,  and $\|S\| \le C \|T\|$ for some constant $C$ independent of $S$ and $T$.
\end{theorem}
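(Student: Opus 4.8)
Since the statement is \cite[Theorem 4.6]{Dinh_Sibony_density}, I only indicate the structure of the argument; the plan, following that reference, is as follows. Everything is local over $V$, so fix holomorphic coordinates $(y,z)=(y_1,\dots,y_l,z_1,\dots,z_{n-l})$ on a chart $U\subset X$ in which $V=\{z=0\}$, identify $E|_{V\cap U}$ with $V\times\C^{n-l}$ through $\tau$, and fix a K\"ahler form $\omega_E$ on $\overline E$ that on this chart is comparable to the standard form in $(y,z)$; then $A_\lambda(y,z)=(y,\lambda z)$, so $A_\lambda^*dz_j=\lambda\, dz_j$ and $A_\lambda^*dy_k=dy_k$. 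The first step (uniform mass bound) goes like this. Fix a compact $K\subset E$; for $|\lambda|$ large, $A_{1/\lambda}(K)$ lies in a fixed tubular neighborhood of $V$, so transporting through $\tau$ and $A_\lambda$ bounds the mass of $(A_\lambda)_*\tau_*T$ over $K$ by $\int_{U_\lambda}T\wedge\beta_\lambda$, where $U_\lambda:=\tau^{-1}(A_{1/\lambda}(K))$ shrinks onto $V$ in the $z$-directions at rate $O(|\lambda|^{-1})$ and $\beta_\lambda$ is the smooth positive form obtained by transporting $(A_\lambda^*\omega_E)^{n-p}$ to $X$. Expanding $\beta_\lambda$, a summand carrying $2k$ of the differentials $dz_j,d\bar z_j$ picks up a factor $|\lambda|^{2k}$ from $A_\lambda^*$, which is exactly compensated by the mass of $T$ on the shrinking tube $U_\lambda$; this is the computation underlying the fact that $\nu(T,V)$ is a limit of normalized masses of $T$ on tubes around $V$, and it yields $\le C\|T\|$ uniformly in $\lambda$ with $C=C(X,\omega,V)$. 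A finite cover of $V$ then bounds the total mass of any candidate limit.

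Next I would handle the limit current. Since each $A_\lambda$ is biholomorphic, $(A_\lambda)_*\tau_*T$ is $d$-closed, hence so is any weak limit $S$ on $E$. Positivity of $S$ is subtler because $\tau$ is only a diffeomorphism: writing $\tau$, near $V$, as a holomorphic map agreeing with it to first order along $V$ plus a remainder (legitimate since $d\tau=\id$ on $E|_V$), the non-positive contributions to $(A_\lambda)_*\tau_*T$ carry extra powers of $z$ and disappear after dilation, so $S\ge0$; the same bookkeeping shows $S$ is independent of the choice of $\tau$. The set $H_\infty:=\overline E\setminus E=\P(E)$ is an analytic hypersurface, and by the first step $S$ has locally finite mass near $H_\infty$, so the Skoda--El Mir extension theorem makes the trivial extension $\widehat S:=\chac_E S$ a closed positive current on $\overline E$. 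Finally, lower semicontinuity of mass under weak limits turns the uniform estimate above into $\|\widehat S\|=\|S\|_E\le C\|T\|$.

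It remains to identify the class and to see that it does not depend on $S$. By Leray--Hirsch, $H^\bullet(\overline E,\R)=\bigoplus_{j=0}^{n-l}\pi^*H^\bullet(V,\R)\smile\xi^j$ with $\xi:=c_1(\mathcal O_{\overline E}(1))$, so $\{\widehat S\}\in H^{p,p}(\overline E)$ is determined by the numbers $\int_V\pi_*(\widehat S\wedge\xi^j)\wedge\beta$ over a basis of classes $\beta$ on $V$; one checks these depend only on $\{T\}|_V$ and the fixed bundle data, not on the particular limit, which gives both the independence of $\{S\}$ and, restricting along the zero section $V\hookrightarrow\overline E$ on which $A_\lambda$ and $\tau$ act trivially, the identity $\{S\}|_V=\{T\}|_V$. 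The main obstacle is exactly the interplay of the two delicate points: showing that the non-holomorphic part of $\tau$ is invisible in the limit (which is also what yields $\tau$-independence), and upgrading the local coordinate estimates into the global cohomological statement on $\overline E$. These are the technical heart of \cite{Dinh_Sibony_density}, and here we simply invoke their Theorem 4.6.
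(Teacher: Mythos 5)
This theorem is not proved in the paper at all: it is recalled verbatim as \cite[Theorem 4.6]{Dinh_Sibony_density}, and your proposal also ultimately defers to that reference, so your approach matches the paper's. Your intermediate outline (local coordinates, $\lambda$-scaling of the $dz$-directions compensated by mass on shrinking tubes, closedness of $(A_\lambda)_*\tau_*T$ since pushforward by diffeomorphisms commutes with $d$, first-order agreement of $\tau$ with the identity killing the non-holomorphic contributions in the $\lambda\to\infty$ limit, Skoda--El Mir trivial extension across $\P(E)$, and Leray--Hirsch for the class computation) is a reasonable and accurate sketch of the actual Dinh--Sibony argument, but since the paper only cites the result, no comparison beyond this is possible.
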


We call $S$  \emph{a tangent current to $T$ along $V$}.  By \cite[Theorem 4.6]{Dinh_Sibony_density} again,  if 
$$S=\lim_{k\to \infty} (A_{\lambda_k})_* \tau_* T$$ for some sequence $(\lambda_k)_k$ converging to $\infty$, then  for every open subset $U$ of $X$ and  every almost-admissible map $\tau': U' \to E$ , we also have  
$$S=\lim_{k\to \infty} (A_{\lambda_k})_* \tau'_* T.$$
This is equivalent to saying that tangent currents are independent of the choice of almost-admissible maps.  By this reason, the sequence $(\lambda_k)_k$ is called \emph{a defining sequence} of $T_\infty.$  
In practice, to study tangent currents, we usually choose $\tau'$ to be a holomorphic change of coordinates.

Let $m\ge 2$ be an integer. Let $T_j$ be a closed positive current  of bi-degree $(p_j, p_j)$ for $1 \le j \le m$ on $X$ and let  $T_1 \otimes \cdots \otimes T_m$ be the tensor current of $T_1, \ldots, T_m$ which is a current on $X^m.$  A \emph{density current} associated to $T_1, \ldots,  T_m$ is a tangent current to $\otimes_{j=1}^m T_j$ along the diagonal $\Delta_m$ of $X^m.$ Let $\pi_m: E_m \to \Delta$ be the normal bundle of $\Delta_m$ in $X^m$. When $m=2$ and $T_2 =[V]$, the density currents of $T_1$ and  $T_2$ are naturally identified with the  tangent currents to $T_1$ along $V$ (see \cite{Dinh_Sibony_density}  or \cite[Lemma 2.3]{Vu_density-nonkahler}).   

%The unique cohomology class of density currents associated to $T_1,\ldots,T_m$ is called \emph{the total density class of $T_1, \ldots, T_m$}. We denote the last class by $\kappa(T_1,\ldots, T_m)$. %The h-dimension of the total density class of   $T_1, \ldots T_m$ is called \emph{the density h-dimension} of  $T_1, \ldots T_m$. We also put
%$$\kappa_j(T_1,\ldots,T_m):= \kappa_j^{\Delta_m}(T_1 \otimes \cdots \otimes T_m)$$   for every $j$. 

We say that the \emph{Dinh-Sibony product} $T_1 \curlywedge \cdots \curlywedge T_m$ of $T_1, \ldots, T_m$ is well-defined  if $\sum_{j=1}^m p_j \le n$ and  there is only one density current associated to $T_1, \ldots, T_m$ and this current is  the pull-back by $\pi_m$ of a current $S$ on $\Delta_m$. % (in particular, the density h-dimension of $T_1,\ldots, T_m$ is equal to the minimal value $n- \sum_{j=1}^m p_j$). 
We define $T_1 \curlywedge \cdots \curlywedge T_m$ to be $S$. 

%\begin{lemma} \label{le-classDSproduct} (\cite[Section 5]{Dinh_Sibony_density})  Let $T_j$ be a closed positive current of bi-degree $(p_j,p_j)$ on $X$ for $1 \le j \le m$ such that $\sum_{j=1}^m p_j \le n$.  Assume that the h-dimension of the total density class associated to $T_1, \ldots, T_m$ is minimal, \emph{i.e}, equal to $n- \sum_{j=1}^m p_j$. Then the total density class of  $T_1, \ldots, T_m$ is equal to $\pi_{m}^*(\wedge_{j=1}^m\{T_j\})$. \end{lemma}

%The following result will be important for our later purpose.
   
%Finally, we will give an upper bound for the volume of Lelong maximal sets associated to a closed positive current.  
We note that one can still define density currents on general complex manifolds; see \cite{VietTuanLucas,Viet_Lucas,Viet-density-nonpluripolar} for details.  We recall the following from \cite{VietTuanLucas}.

Let $R$ be a positive closed current and $v$ be a psh function on an open subset $\Omega$ in $\C^n$. If $v$ is locally integrable with respect to (the trace measure) of $R$, we define
\begin{equation} \label{eq:def-bedford-taylor}
\ddc v \wedge R := \ddc (v R),
\end{equation}
see \cite{Bedford_Taylor_82}.

For a collection $v_1, \ldots, v_{s}$ of  psh functions, we can apply the above definition recursively, as long as the integrability conditions are satisfied. 

\begin{definition} \label{def-bedford-taylor} We say that the intersection  of $\ddc v_1, \ldots, \ddc v_s,R$ is \emph{classically well-defined} if for every non-empty subset $J=\{j_1, \ldots, j_k\}$ of $\{1, \ldots, s\}$, we have that $v_{j_k}$ is locally integrable with respect to the trace measure of $R$ and inductively,  $v_{j_r}$ is locally integrable with respect to the trace measure of $\ddc v_{j_{r+1}} \wedge \cdots \wedge \ddc v_{j_k} \wedge R$ for $r=k-1, \ldots, 1$, and the product $\ddc v_{j_{1}} \wedge \cdots \wedge \ddc v_{j_k} \wedge R$ is continuous under decreasing sequences of psh functions.
\end{definition}

The above definition is generalized in an obvious way to the case where $\Omega$ is replaced by a complex manifold. If $X$ is a complex manifold and $T_1,\ldots,T_m$ are closed positive currents of bi-degree $(1,1)$ on $X$ and $R$ is closed positive current on $X$, then we say that the intersection  $T_1\wedge \cdots T_m \wedge R$ of $T_1,\ldots, T_m,R$ is \emph{classically well-defined} if they are so locally on $X$.

The reason for Definition \ref{def-bedford-taylor} is that in all of well-known standard situations  warranting the well-definedness of the intersection of currents (see, e.g, \cite{Demailly_ag}), the intersection satisfies the continuity under decreasing sequences. In particular if $T_j$ has locally bounded potentials outside some analytic subset $V_j$ in $X$ and $R:= [W]$ the current of integration along an analytic set $W$, and 
$$\dim (V_{j_1} \cap \cdots \cap V_{j_s} \cap W) \le \dim W- s,$$
 for every $1 \le j_1 < \cdots < j_s \le m$, then $T_1 \wedge \cdots \wedge T_m \wedge R$ is classically well-defined.

\begin{theorem}\label{th-density11agreewithclassical} (\cite[Corollary 3.11]{VietTuanLucas}) Let $X$ be a complex manifold. Let $T_1, \ldots, T_m$ be closed positive currents of bi-degree $(1,1)$ on $X$ and let $R$ be a closed positive current on $X$ such that $T_1 \wedge \cdots \wedge T_m \wedge R$ is classically well-defined. Then the Dinh-Sibony product of $T_1, \ldots, T_m,R$ is well-defined and equal to $T_1 \wedge \cdots \wedge T_m \wedge R$. 
\end{theorem}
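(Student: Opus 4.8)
\emph{Sketch of the proof.} The plan is to argue locally and by induction on $m$. Both the Dinh--Sibony product and the classical intersection being local, I would fix a small ball $\Omega\subset\C^n$ and write $T_j=\ddc u_j$ for psh functions $u_j$ on $\Omega$. Denoting by $\pi\colon E_{m+1}\to\Delta$ the normal bundle of the diagonal $\Delta:=\Delta_{m+1}\subset\Omega^{m+1}$, what must be shown is that $T_1\otimes\cdots\otimes T_m\otimes R$ admits a unique tangent current along $\Delta$ and that it equals $\pi^*(\ddc u_1\wedge\cdots\wedge\ddc u_m\wedge R)$. With the holomorphic almost-admissible map $\tau(z^{(0)},\dots,z^{(m)})=(z^{(0)},z^{(1)}-z^{(0)},\dots,z^{(m)}-z^{(0)})$ one trivialises $E_{m+1}\cong\Delta\times\C^{nm}$, the dilation becomes $A_\lambda(w^{(0)},w^{(1)},\dots,w^{(m)})=(w^{(0)},\lambda w^{(1)},\dots,\lambda w^{(m)})$, and the problem is to identify $\lim_{\lambda\to\infty}(A_\lambda)_*\tau_*(T_1\otimes\cdots\otimes T_m\otimes R)$.

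For $m=1$ this is the comparison theorem of \cite{Dinh_Sibony_density}: if $T_1=\ddc u_1$ is a closed positive $(1,1)$-current, $R$ a closed positive current, and $u_1$ is locally integrable with respect to the trace measure of $R$, then the unique density current of $T_1$ and $R$ is $\pi^*\ddc(u_1R)=\pi^*(\ddc u_1\wedge R)$, and $\ddc(u_1R)$ depends continuously on $u_1$ under decreasing sequences (Bedford--Taylor, \cite{Bedford_Taylor_82}).

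For the inductive step, set $R_1:=\ddc u_m\wedge R=\ddc(u_mR)$. This is a well-defined closed positive current, since the case $J=\{m\}$ of Definition \ref{def-bedford-taylor} gives that $u_m$ is locally integrable with respect to the trace measure of $R$; and $R_1=T_m\curlywedge R$ by the case $m=1$. A direct inspection of Definition \ref{def-bedford-taylor} shows that classical well-definedness of $\ddc u_1\wedge\cdots\wedge\ddc u_m\wedge R$ entails that of $\ddc u_1\wedge\cdots\wedge\ddc u_{m-1}\wedge R_1$: every integrability and continuity requirement for the latter is one of the requirements for the former after adjoining the index $m$ to the subset in play. Applying the induction hypothesis to $T_1,\dots,T_{m-1}$ and $R_1$ therefore gives
\[
T_1\curlywedge\cdots\curlywedge T_{m-1}\curlywedge R_1=\ddc u_1\wedge\cdots\wedge\ddc u_{m-1}\wedge R_1=\ddc u_1\wedge\cdots\wedge\ddc u_m\wedge R .
\]
It then remains only to prove the associativity
\[
T_1\curlywedge\cdots\curlywedge T_m\curlywedge R=T_1\curlywedge\cdots\curlywedge T_{m-1}\curlywedge(T_m\curlywedge R),
\]
which I would establish by computing the tangent current along $\Delta$ in two stages through the flag $\Delta\subset\Delta'\subset\Omega^{m+1}$, where $\Delta':=\{z^{(0)}=z^{(m)}\}\cong\Omega^m$ is the partial diagonal merging the $R$-factor with the $T_m$-factor. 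The first-stage tangent current along $\Delta'$ should be the pull-back to the normal bundle of $\Delta'$ of $(T_m\curlywedge R)\otimes T_1\otimes\cdots\otimes T_{m-1}$ --- here one uses crucially that $T_m\curlywedge R$ is a genuine current on $\Delta'$, namely the case $m=1$ already established --- and the second-stage tangent current, along the diagonal of $\Delta'\cong\Omega^m$, is by definition $T_1\curlywedge\cdots\curlywedge T_{m-1}\curlywedge(T_m\curlywedge R)$. Throughout, one regularises the $u_j$ by decreasing sequences of smooth psh functions, for which all products in sight are the ordinary wedge products and the two-stage computation is transparent, and then passes to the limit using the continuity clause of Definition \ref{def-bedford-taylor} together with the uniform mass bounds of Theorem \ref{th-dieukienHVconictangenetcurrent}.

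The main obstacle is precisely this associativity: the assertion that the tangent current along the iterated diagonal $\Delta$ can be obtained by dilating the two normal directions successively rather than simultaneously. Associativity of Dinh--Sibony products fails in general, so the classical well-definedness hypothesis must be used essentially --- the delicate point being to show that no mass escapes or is created in passing from the single dilation $A_\lambda$ to the two successive ones. I would control this by Chern--Levine--Nirenberg-type mass estimates on the normal bundles, combined with the fact that, by hypothesis, the smooth approximants $\ddc u_1^{(k)}\wedge\cdots\wedge\ddc u_m^{(k)}\wedge R$ converge to $\ddc u_1\wedge\cdots\wedge\ddc u_m\wedge R$. Once this is in place, the uniqueness of the tangent current of $T_1\otimes\cdots\otimes T_m\otimes R$ and its pull-back structure follow, and comparison with the classical side finishes the proof.
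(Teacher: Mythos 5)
The paper does not actually prove this statement: it quotes it from \cite{VietTuanLucas} (Corollary 3.11), so there is no ``paper's own proof'' for me to line your argument up against. That said, your sketch has a genuine gap at its central step.

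\emph{The base case is misattributed.} You invoke ``the comparison theorem of \cite{Dinh_Sibony_density}'' for $m=1$ in the form: $u_1$ locally $\sigma_R$-integrable implies the unique density current of $T_1$ and $R$ is $\pi^*(\ddc u_1\wedge R)$. But Dinh--Sibony prove this only when $u_1$ is \emph{continuous}; the paper says so explicitly right after the statement of Theorem~\ref{th-density11agreewithclassical}. The case of an unbounded $\sigma_R$-integrable potential with the Bedford--Taylor continuity clause is precisely the content of \cite{VietTuanLucas} for $m=1$, i.e.\ it is already the theorem you are trying to prove, not a known input. So your induction begins on an unproven base.

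\emph{The inductive step rests on an h-principle for tangent currents that you do not establish.} The identity you need,
\[
T_1\curlywedge\cdots\curlywedge T_m\curlywedge R \;=\; T_1\curlywedge\cdots\curlywedge T_{m-1}\curlywedge\bigl(T_m\curlywedge R\bigr),
\]
is proposed via dilating first in the normal direction of $\Delta'=\{z^{(0)}=z^{(m)}\}$ and then in the remaining normal directions. This is the assertion that an iterated limit $\lim_\mu\lim_\lambda$ of dilations equals the joint limit $\lim_\lambda$, i.e.\ a tower (or ``semi-continuity along flags'') property for tangent currents. This is known to fail in general and is one of the hard open issues in the theory of density currents; nothing in Definition~\ref{def-bedford-taylor} obviously forces it, since that definition only speaks of decreasing sequences of potentials on $X$, not of the behaviour of dilations in $X^{m+1}$. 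Your invocation of Chern--Levine--Nirenberg bounds and ``no mass escapes'' gestures at the difficulty without resolving it: CLN-type estimates give uniform mass bounds, hence subsequential limits, but they do not by themselves identify the joint-dilation limit with the iterated one, nor do they rule out a positive residual term supported over $\P(E\oplus\C)\setminus E$. In effect, the associativity you need is as strong as the theorem itself, so the induction is circular as written. A correct argument works with all $m+1$ factors at once: regularise each $u_j$ by a decreasing sequence of smooth psh $u_j^{(k)}$, use Dinh--Sibony's continuous-potential comparison to identify the tangent current of the regularised tensor product with $\pi^*\bigl(\ddc u_1^{(k)}\wedge\cdots\wedge\ddc u_m^{(k)}\wedge R\bigr)$, and then pass to the limit in $k$ \emph{uniformly in the dilation parameter} $\lambda$; the whole work is in this last exchange of limits, for which the integrability hypotheses for \emph{every} subset $J$ in Definition~\ref{def-bedford-taylor} (not just $J=\{1,\dots,m\}$) are what supply the needed monotone/uniform control. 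Reducing to a single extra factor at a time does not shortcut this.

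Your observation that classical well-definedness of $\ddc u_1\wedge\cdots\wedge\ddc u_m\wedge R$ implies that of $\ddc u_1\wedge\cdots\wedge\ddc u_{m-1}\wedge(\ddc u_m\wedge R)$ is correct and harmless, but it does not carry the weight of the proof.
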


Note that the last result was proved in \cite{Dinh_Sibony_density} when local potentials of $T_j$'s are continuous. We recall the following.

\begin{lemma} \label{le-countablelelong} The set of Lelong numbers of an arbitrary closed positive current on an open subset in $\C^n$ is countable. 
\end{lemma}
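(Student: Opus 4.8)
The plan is to deduce the statement from Siu's analyticity theorem together with a short induction on dimension. Let $\Omega\subset\C^n$ be the open set and $T$ the current, and for $c>0$ put $E_c:=\{z\in\Omega:\ \nu(T,z)\ge c\}$. By Siu's theorem (see \cite{Demailly_ag}) each $E_c$ is an analytic subset of $\Omega$. Consequently \emph{every} upper-level set of the function $\nu(T,\cdot)\colon\Omega\to[0,\infty)$ is analytic in $\Omega$: the set $\{z\in\Omega:\nu(T,z)\ge c\}$ equals $E_c$ when $c>0$ and equals $\Omega$ when $c\le 0$.

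It therefore suffices to prove the following elementary claim: \emph{if $A$ is an analytic subset of a complex manifold and $f\colon A\to\R$ is a function all of whose upper-level sets $\{f\ge c\}$, $c\in\R$, are analytic subsets of $A$, then the image $f(A)$ is countable.} First I would reduce to the case where $A$ is irreducible: since the ambient manifold is second countable, $A$ has at most countably many irreducible components, $f(A)$ is the union of their images, and the restriction of $f$ to each component again has analytic upper-level sets. For irreducible $A$ I would argue by induction on $d:=\dim A$. If $d=0$ then $A$ is a point and there is nothing to prove. If $d\ge 1$, set $g_0:=\inf_A f$; for every rational number $c>g_0$ the upper-level set $\{f\ge c\}$ is a \emph{proper} analytic subset of the irreducible set $A$, hence has dimension at most $d-1$, so by the induction hypothesis applied to its (countably many) irreducible components the set $f(\{f\ge c\})$ is countable. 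Since every $z\in A$ with $f(z)>g_0$ lies in $\{f\ge c\}$ for some rational $c\in(g_0,f(z))$, we obtain
\[
f(A)\ \subseteq\ \{g_0\}\cup\bigcup_{c\in\mathbb{Q},\ c>g_0}f\big(\{f\ge c\}\big),
\]
which is countable, completing the induction.

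Applying the claim with $A=\Omega$ and $f=\nu(T,\cdot)$ — whose upper-level sets are analytic by the first paragraph — shows at once that $\nu(T,\cdot)$ takes only countably many values on $\Omega$, which is precisely the assertion of the lemma.

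The only non-elementary input is Siu's theorem on the analyticity of the sets $E_c$; once this is in hand the rest is a routine induction on dimension, so I expect the main (and essentially only) point is recognising that Siu's theorem is what is needed here. The single feature that makes the induction terminate is that a proper analytic subset of an irreducible analytic set has strictly smaller dimension; besides this one just uses second countability of $\C^n$ to keep every union countable.
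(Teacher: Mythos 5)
Your proof is correct and takes essentially the same route as the paper: Siu's analyticity theorem combined with induction on dimension, using the fact that a proper analytic subset of an irreducible analytic set has strictly smaller dimension. You package the argument slightly more abstractly (an elementary claim about functions whose upper-level sets are analytic, with an explicit reduction to irreducible components), whereas the paper works directly with the sets $E_{1/k}=\{x:\nu(T,x)\ge\nu(T,V)+1/k\}$ for integer $k$, but the underlying induction is identical.
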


\proof For readers' convenience, we present below a proof communicated to us by Tien-Cuong Dinh. It is more or less a direct consequence of Siu's analyticity of Lelong upper level sets (\cite{Siu}). Let $T$ be a closed positive current on $X$.  We show that for every analytic set $V$ in $X$, the set $\{x \in V: \nu(T,x)>0\}$ is at most countable. The desired assertion is the case where $V=X$. We prove this claim by induction on the dimension of $V$. If $\dim V=0$, there is nothing to prove. Assume that the claim holds for every analytic set of dimension $<m$. Let $V$ be now of dimension $m$. For $k>0$ integer, consider 
$$E_{1/k}:= \{x \in V: \nu(T,x) \ge \nu(T,V)+ 1/k\}$$
 which is a proper analytic subset of $V$ by Siu's analyticity theorem \cite{Siu}. Applying the induction hypothesis to $E_{1/k}$, one obtains that the set of Lelong numbers of $T$ on $E_{1/k}$ is at most countable. Since there is a countable number of $E_{1/k}$, we infer the claim for $V$. This finishes the proof. 
\endproof

The following result is also standard.

\begin{lemma}\label{le-lowerboundvolumeana} Let $X$ be a compact complex manifold of dimension $n$ and $\omega$ be a Hermitian form on $X$. Then  there exists a constant $C>0$ such that for every $0 \le p \le n$ and  every closed positive $(p,p)$-current $T$ and every $x \in X$, we have 
\begin{align}\label{ine-chanduoivoluelelong}
\int_X T \wedge \omega^{n-p} \ge C \nu(T, x).
\end{align}
In particular there exists a constant $C>0$ such that if $W$ is an irreducible analytic subset of dimension $m$ on $X$, then the $\vol(W):= \int_W \omega^m \ge C$.
\end{lemma}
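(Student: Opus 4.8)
\medskip
\noindent\emph{Sketch of the argument.} The plan is to reduce the global inequality (\ref{ine-chanduoivoluelelong}) to the standard local comparison between the mass of a closed positive current on a small Euclidean ball and its Lelong number at the center, and to make that comparison uniform in $x$ by a compactness argument.

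First I would fix, using compactness of $X$, a finite atlas of holomorphic charts $(U_i,z_i)_{i\in I}$, relatively compact subcharts $U_i'\Subset U_i$ that still cover $X$, and a radius $r_0>0$ so small that for every $i$ and every $x\in U_i'$ the Euclidean ball $B_i(x,r_0)$ taken in the coordinates $z_i$ is contained in $U_i$; such an $r_0$ exists by the Lebesgue number lemma applied to the cover $(U_i)$ and a fixed Riemannian metric on $X$. Let $\beta_i:=\ddc|z_i|^2$ be the standard flat K\"ahler form in the $i$-th chart. Comparing the Hermitian form $\omega$ with $\beta_i$ pointwise on the compact set $\overline{U_i'}$ produces a constant $a_i>0$ with $\omega\ge a_i\beta_i$ there; set $a:=\min_{i\in I}a_i>0$.

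Now let $T$ be a closed positive $(p,p)$-current on $X$ and $x\in X$, and choose $i$ with $x\in U_i'$. By the classical monotonicity of the normalized mass of a closed positive current on concentric balls (see \cite{Demailly_ag}), there is a constant $b_{n,p}>0$ depending only on $n$ and $p$ with
\begin{align*}
\int_{B_i(x,r_0)}T\wedge\beta_i^{\,n-p}\ \ge\ b_{n,p}\,r_0^{\,2(n-p)}\,\nu(T,x).
\end{align*}
Writing $\omega=a\beta_i+\gamma$ with $\gamma\ge0$ on $\overline{U_i'}$ and expanding $\omega^{n-p}$, every mixed wedge product of the positive $(1,1)$-forms $\beta_i,\gamma$ against the positive current $T$ is a positive measure, so $\omega^{n-p}\wedge T\ge a^{n-p}\beta_i^{\,n-p}\wedge T$ on $B_i(x,r_0)$, whence
\begin{align*}
\int_X T\wedge\omega^{n-p}\ \ge\ \int_{B_i(x,r_0)}T\wedge\omega^{n-p}\ \ge\ a^{n-p}\int_{B_i(x,r_0)}T\wedge\beta_i^{\,n-p}\ \ge\ a^{n-p}b_{n,p}\,r_0^{\,2(n-p)}\,\nu(T,x).
\end{align*}
Setting $C:=\min_{0\le p\le n}a^{n-p}b_{n,p}\,r_0^{\,2(n-p)}>0$, a minimum of finitely many positive numbers and in particular independent of $T$, $x$ and $p$, yields (\ref{ine-chanduoivoluelelong}).

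For the last assertion, apply (\ref{ine-chanduoivoluelelong}) to $T:=[W]$, which is closed positive of bi-degree $(n-m,n-m)$, so that $\int_X[W]\wedge\omega^{m}=\int_W\omega^m=\vol(W)$. If $W\neq X$ then at any point $x\in W$ the Lelong number $\nu([W],x)$ equals $\mult_x(W)\ge1$, so $\vol(W)\ge C$; if $W=X$ then $\vol(X)=\int_X\omega^n>0$ is a fixed positive constant, so after replacing $C$ by $\min\big(C,\int_X\omega^n\big)$ the inequality holds in all cases. The only mildly delicate point is producing a single radius $r_0$ that works simultaneously for all $x$ and all charts, which is precisely what the Lebesgue number lemma provides; the remaining ingredients — the local Lelong mass estimate and the pointwise comparison of two metrics on a compact set — are entirely standard, so I do not anticipate a genuine obstacle.
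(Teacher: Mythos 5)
Your proposal is correct and follows essentially the same route as the paper's proof: a finite cover by charts with relatively compact subcharts, a uniform ball radius (the paper invokes it directly; you justify it via the Lebesgue number lemma), the classical local comparison of ball mass with the Lelong number, the pointwise comparison of $\omega$ with the flat K\"ahler form on each compact subchart, and the observation $\nu([W],x)=\mult_x(W)\ge1$ for the last claim. The only additions you make are cosmetic — spelling out the positivity of the mixed wedge terms in the expansion of $\omega^{n-p}$, and covering the edge case $W=X$ — neither of which changes the argument.
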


\proof Cover $X$ be a finitely many local charts $U_1,\ldots, U_m$. We can assume furthermore that $\overline U_j$ is contained in a bigger local chart $U'_j$ for $1 \le j \le m$. Let $x \in X$. Then $x \in U_j$ for some $j$. We work locally  on $U_j \subset \C^n$. Let $\omega_0$ be the standard K\"ahler form on $\C^n$. Note that  there is a constant $\delta>0$ independent of $x$ such that $\B(x,2\delta) \subset U'_j$.  By the definition of Lelong numbers, we have 
$$\int_{\B(x,\delta)} T \wedge \omega_0^{n-p} \ge C_n \delta^{2(n-p)} \nu(T,x),$$
where $C_n$ is an explicit dimensional constant. Since $\omega$ is Hermitian, we obtain a constant $C_1>0$ such that $C_1\omega \ge \omega_0$. Thus
$$\int_{\B(x,\delta)} T \wedge \omega_0^{n-p} \le C_1^{n-p} \int_{\B(x,\delta)} T \wedge \omega^{n-p} \le C_1^{n-p} \int_X T \wedge \omega^{n-p}.$$
Hence (\ref{ine-chanduoivoluelelong}) follows. As for the second desired assertion, we recall that $\nu([W],x)$ is equal to the multiplicity of $x$ in $W$ which is a positive integer. Hence $\nu([W],x) \ge 1$ for every $x\in W$. The uniform lower bound for the volume of $W$ thus follows.  
\endproof

\begin{proof}[Proof of Theorem \ref{the-self-inters}]   By \cite[Theorem 3.1]{Vigny-LelongSkoda}, we can assume that $T$ is of bi-degree $(1,1)$. \\

\noindent
\textbf{Step 1.} We assume first that $T$ has analytic singularities in $X$ (i.e., potentials of $T$ are locally the sum of a bounded function and the logarithmic of a sum of holomorphic functions) and $\nu(T,W)=0$. We will explain how to remove this assumption at the end of the proof.

For an irreducible analytic set $V$, we denote by $[V]$ the current of integration along $V$. Denote by $L(T)$  the unbounded locus of $T$ (for a definition, see \cite[Page 150]{Demailly_ag}). In our present setting, the last set is an analytic subset of $X$.  Since $\nu(T,W)=0$, we have $W\not \subset L(T)$.  Let $0 \le l \le  m-1$ be an integer. Let $\cali{V}_{T,W}^l$ be the set of $V \in \cali{V}_{T,W}$ such that $\dim V =l$. The last set is finite because $T$ has analytic singularities.  Note that 
$$\cali{V}_{T,W}= \cup_{l=0}^{m-1} \cali{V}_{T,W}^{l}.$$  
Let $V \in \cali{V}_{T,W}^l$.  By the maximality of $V$, we see that $V$ is an irreducible component of $L(T) \cap W$.  Thus, since $\dim V =l$, we deduce that the intersection $T^{m-l} \wedge [W]$ is classically well-defined on some open neighborhood  $U_V$ of $V \backslash \Sing\big(L(T)\cap W\big)$, where $\Sing\big(L(T) \cap W\big)$ denotes the singular locus of the analytic set $L(T)\cap W$. 
 
Let $Q:= T^{m-l} \wedge [W]$ on $U_V$. Now, using a comparison result on Lelong numbers (\cite[Page  169]{Demailly_ag}) gives that 
$$Q \ge   \big(\nu(T,V)\big)^{m-l}\,[V]$$
 on $U$. Letting $V$ run over every element of $\cali{V}_{T,W}^l$, we infer that $Q$ is well-defined on some open neighborhood $U$ of 
$$\bigcup_{V \in \cali{V}_{T,W}^l} V  \backslash \Sing\big(L(T)\cap W\big),$$
 and
\begin{align}\label{ine-dimVlself-interQ}
Q \ge \sum_{V \in \cali{V}_{T,W}^l}  \big(\nu(T,V)\big)^{m-l}\, [V]
\end{align}
on $U$.   We now estimate the mass of $Q$.  Let $\Delta_{m-l+1}$ be the diagonal of $X^{m-l+1}$ and $\pi: E \to \Delta_{m-l+1}$ the normal bundle of $\Delta_{m-l+1}$ in $X^{m-l+1}$.  Let $\tilde{Q}$ be a density current associated to  $T, \ldots, T$ ($m-l$ times), and $[W]$. Recall that $\tilde{Q}$ is a closed positive current on $\overline E:= \P(E \oplus \C)$.   By Theorem \ref{th-dieukienHVconictangenetcurrent},  we have 
\begin{align}\label{inemassofQngaself-inter}
\|\tilde{Q}\|  \lesssim \|T\|^{m-l} \vol (W).
\end{align}
Since $Q$ is well-defined on $U$, using Theorem \ref{th-density11agreewithclassical}, we obtain that $\tilde{Q}= \pi^* Q$ on $\pi^{-1}(U)$, where we have identified $\Delta_{m-l+1}$ with $X$.  This combined with (\ref{inemassofQngaself-inter}) yields that 
$$\|Q\| \lesssim \|T\|^{m-l} \vol(W).$$
 Using this and (\ref{ine-dimVlself-interQ}) gives
$$ \sum_{V \in \cali{V}_{T,W}^l}  \big(\nu(T,V)\big)^{m-l} \vol(V) \lesssim \|T\|^{m-l} \vol(W)$$
for every $l$.  Summing the last inequality over $l$ gives the desired inequality.  \\

\noindent
\textbf{Step 2.} We now get rid of the assumption that $T$  has analytic singularities (but we still require $\nu(T,W)=0$).

By Lemma \ref{le-countablelelong} there are at most countably many maximal Lelong level sets of $T$ on $W$. Hence we can write 
$$\cali{V}_{T,W}= \{V_1,\ldots, V_k, \ldots\}.$$
Let $r>0$ be a small constant, and let $A_r$ be the finite subset of $\cali{V}_{T,W}$ consisting of $V$ such that $\nu(T,V) \ge r$. Therefore $A_r$ increases to $\cali{V}_{T,W}$ as $r\to 0$.

Let $(T_\epsilon)_{\epsilon}$ be the sequence of closed positive $(1,1)$-current regularising $T$ given by  Demailly's analytic approximation of psh functions (\cite[Corollary 14.13]{Demailly_analyticmethod}). These currents satisfy the following properties. One has $T_\epsilon \to T$ weakly as $\epsilon \to 0$, and  $\nu(T_\epsilon, \cdot) \le \nu(T,\cdot)$ for every $\epsilon$, and $\nu(T_\epsilon, \cdot) \to \nu(T,\cdot)$ uniformly on $X$ as $\epsilon \to 0$.  It follows that for every $r>0$ there exists an $\epsilon>0$ small enough such that $A_r \subset \cali{V}_{T_\epsilon,W}$. Applying the first part of the proof to $T_\epsilon$ yields that 
$$\sum_{V \in A_r} \big(\nu(T_\epsilon,V)\big)^{m-\dim V}\vol(V) \le c\vol(W) \|T_\epsilon\|(1+ \|T_\epsilon\|)^{m-1},$$
for some constant $c>0$ independent of $T,W$. Letting $\epsilon \to 0$, and then $r \to 0$ gives the desired inequality.
\\

\noindent
\textbf{Step 3.} Finally we treat the general case where $\nu(T,W)>0$. We use \cite[Theorem 1.1]{Demailly_regula_11current} which is a regularisation result of $(1,1)$-currents (see also \cite[Theorem 6.1]{Demailly_appro_chernconnec}). The last theorem allows one to cut down the Lelong level set $E_c:= \{x \in X: \nu(T,x) \ge c\}$ from $T$, where $c>0$ is a constant. Choose $c:= \nu(T,W)$ which is bounded by a uniform constant times $\|T\|$. By  \cite[Theorem 1.1]{Demailly_regula_11current}, we obtain a closed positive $(1,1)$-current $T'$ such that $\|T'\| \le M \|T\|$ ($M$ is a constant independent of $T$), and  $\nu(T',x)= \max\{\nu(T,x)-c, 0\}$, and additionally $T'$ is smooth outside $E_c$ (but we don't need this property at this point). Hence 
$$\cali{V}_{T,W}= \cali{V}_{T',W}, \quad \nu(T',W)=0,$$
 and for every $V \in \cali{V}_{T,W}$ one gets $\nu(T,V)- \nu(T,W)= \nu(T',V)$. Thus the desired inequality follows from Step 2 applied to $T'$ and $W$. 
This finishes the proof.    
\end{proof}

%\begin{proof}[Proof of Corollary \ref{cor-apdungRbangT}] Let $V \in \mathcal{V}_T$ be of dimension $m$. Then $T= \nu(T,V) [V]+ R$, for some closed positive current $R$ (see \cite[Page 180]{Demailly_ag}). Thus $\|T\| \ge \nu(T,V) \vol(V)$. For $V \in \mathcal{V}_T$ of dimension $<m$, the desired estimate for $\vol(V)$ follows from Theorem \ref{the-self-inters} applied to $R:=T$. This finishes the proof.\end{proof}

\begin{proof}[Proof of Theorem \ref{theo-apdungRbangT}]
Observe that the length of a Lelong filtration is at most $n-1$, so it is bounded uniformly.  Let $\cali{F}_{T,l}$ be the set of Lelong filtrations of $T$ of length $l$ for $0 \le l \le n-1$. We prove by induction on $l$ that 
\begin{align} \label{ine-quynhaplelongfiltration}
\sum_{\mathbf{V} \in \cali{F}_{T,l}} \vol(\mathbf{V}) \le c \|T\| (1+\|T\|)^{n^{l+1}},
\end{align}
for some constant $c>0$ independent of $T$. If $l=0$, the desired estimate is Corollary \ref{cor-apdungRbangT}. We assume that (\ref{ine-quynhaplelongfiltration}) holds for $l-1$. For every irreducible analytic subset $W$ in $X$, define $\cali{F}_{T,l, W}$ to be the subset of $\cali{F}_{T,l}$ consisting of $\mathbf{V}=(V_j)_{0 \le j \le l}$ such that $V_1= W$. We have
$$\cali{F}_{T,l}= \cup_W \cali{F}_{T,l, W}.$$

 For every $\mathbf{V}= (V_j)_{0 \le j \le l} \in \cali{F}_{T,l,W}$, we associate to it a Lelong filtration $\mathcal{C}_W(\mathbf{V})$ of length $l-1$ by putting
$$\mathcal{C}_W(\mathbf{V}):= (V_j)_{1 \le j \le l}.$$ 
Hence $\mathcal{C}_W$ is a map from $\cali{F}_{T,l,W}$ to $\cali{F}_{T,l-1}$. The image $\cali{F}_{T,l,W}$ under $\mathcal{C}_W$ is the subset of $\cali{F}_{T,l-1}$ consisting of Lelong filtrations starting from $W$. The fiber of $\mathcal{C}_W$ is naturally identified with $\cali{V}_{T,W}$. 
 Applying now Theorem \ref{the-self-inters} to $T$ and $W$ yields
\begin{align*}
\sum_{V \in \cali{V}_{T,W}} (\nu(T,V)-\nu(T,W))^{\dim W- \dim V} \vol(V) &\lesssim  \vol(W) \|T\| (1+\|T\|)^{m-1}.
\end{align*}
Thus
\begin{align*}
\sum_{\mathbf{V}\in \cali{F}_{T,l,W}} \vol(\mathbf{V}) &\lesssim \sum_{\mathbf{V}_1 \in \mathcal{C}_W(\cali{F}_{T,l,W})} \vol(\mathbf{V}_1) \|T\| (1+\|T\|)^{m-1}\\
&\le\sum_{\mathbf{V}_1 \in \mathcal{C}_W(\cali{F}_{T,l,W})} \vol(\mathbf{V}_1)  (1+\|T\|)^{n}
\end{align*}
Summing again over $W$, one get
$$\sum_W \sum_{\mathbf{V} \in \cali{F}_{T,l,W}} \vol(\mathbf{V}) \lesssim \sum_{\mathbf{V}_1 \in \cali{F}_{T,l-1}} \vol(\mathbf{V}_1) (1+\|T\|)^{n}.$$
Since the left-hand side of the last inequality is equal to 
$$\sum_{\mathbf{V} \in \cali{F}_{T,l}} \vol(\mathbf{V}),$$
 the induction hypothesis now implies the desired assertion for $l$. 
This finishes the proof.
\end{proof}

\begin{remark} \label{re-lelongbignef} We explain now how to deduce Theorem \ref{the-self-inters} in the case where $W=X$ from \cite[Theorem 4.1]{Vu_lelong-bignef-quantitative}. Firstly by considering $T/\|T\|$ instead of $T$, we can assume that $T$ of mass $1$.  Observe now that  there is a constant $c>0$ independent of $T$ such that $T+ c \omega$ lies in a fixed compact subset of  K\"ahler cone in $X$. Thus Theorem \ref{the-self-inters} for $W=X$ follows directly from \cite[Theorem 4.1]{Vu_lelong-bignef-quantitative}.     \end{remark}

\begin{proof}[Proof of Corollary \ref{cor-volumesingularset}] We prove first (\ref{ine-vol-sing}). Assume for the moment that $W$ is irreducible. 

Let $T:= [W]$. Note that elements of $\cali{V}_{T,W}$ are irreducible components of $W_1$. Moreover $\nu(T,V) - \nu(T,W) \ge 1$ for $V \in \cali{V}_{T,W}$ because it is a strictly positive integer. This combined with Theorem \ref{the-self-inters} applied to $T:= [W]$ and $W$ yields (\ref{ine-vol-sing}). The case where $W$ is not necessarily irreducible follows by applying the previous case to each irreducible component of $W$. % and noting that any irreducible component of the singular locus of $W$ must be an irreducible component of the singular locus of some irreducible component of $W$. 

It remains to prove (\ref{ine-vol-sing2}).  Let $(W_j)_{1 \le j \le m}$ be the singularity filtration of $W$. Note that $m \le n$. % We prove by induction in $1 \le s \le m$ that $$\sum_{j=1}^s \vol(W_j) \le c \vol(W)(1+ \vol(W))^{m^s}.$$The case where $s=1$ is (\ref{ine-vol-sing}).   Suppose now that the desired inequality holds for $s-1$. We prove it for $s$. 
Note that $\dim W_{s-1} \le \dim W- (s-1)$. Applying (\ref{ine-vol-sing}) to $W_{s-1}$ in place of $W$ for $1 \le s \le m$, we obtain 
$$\vol(W_{s}) \le c \vol(W_{s-1})(1+\vol(W_{s-1}))^{m- (s-1)},$$
for some constant $c>0$ independent of $W$.
An induction argument thus gives 
$$\vol(W_s) \lesssim \vol(W_l) (1+ \vol(W_l))^{(m-l+2)^{s-l}}$$
for every $0 \le l \le s \le m$. Applying the last inequality to $l=0$ gives 
$$\vol(W_s) \lesssim \vol(W)(1+\vol(W))^{(m+2)^s}$$
for every $1 \le s \le m$. 
This finishes the proof.
\end{proof}

We end the paper with some examples. 

\begin{remark} \label{re-demailly-self} Let $X:= \P^n$ with $n>3$ and $x=[x_0: \cdots:x_n]$ homogeneous coordinates. Let $V_1,V_2,V_3$ be linear subspaces in $\P^n$ such that $\dim V_1= n-1$, $\dim V_2= n-2$, and $\dim V_3=1$, and $V_3 \not \subset V_2 \cup V_1$, and $V_2 \not \subset V_1$.  Let $T_1,T_2,T_3$ be closed positive currents of bi-degree $(1,1)$ such that the set of points with positive Lelong number for $T_j$ is equal exactly to $V_j$, and $\nu(T_j,x)= \nu(T_j,V_j)$ for every $x \in V_j$, and for  $j=1,2,3$ (e.g., $V_2= \{x_0=x_1=0\}$, choose $T_2:= \ddc (|x_0|^2+ |x_1|^2)$).  Choosing positive constants $\lambda_j$ suitably, one see that the current
$$T:= \lambda_1 T_1+ \lambda_2  T_2+ \lambda_3 T_3$$
satisfies 
$$\nu(T,V_1)= 1/100, \quad \nu(T, V_2)= 1/100, \quad \nu(T, V_3)=1.$$
Hence $V_1,V_2,V_3$ are all maximal Lelong upper level sets for $T$. 
On the other hand, if $E_c:=\{x \in \P^n, \nu(T,x) \ge c\}$, 
then  $E_c=V_1 \cup V_2\cup V_3$ if $c \in (0, 1/100]$, and $E_c=V_3$ if $c \in (1/100,1]$, and $E_c= \varnothing$ if $c>1$. One sees that the maximal set $V_2$ is not taken into account in the self-intersection inequality in \cite[Theorem 1.7]{Demailly_regula_11current} as well as its generalizations in \cite{Meo-auto-inter,Vigny-LelongSkoda} (because $V_2$ is of codimension 2, but only appears in $E_{1/100}$ which is of codimension 1).
\end{remark}

\begin{example}\label{ex-phaidungrelative} Let $X= \P^3$, $W\approx \P^2$ a hyperplane in $\P^3$ and $D$ a hypersurface of degree $d$ in $\P^3$ so that $D$ intersects $W$ at a smooth curve $V$ (of degree $d$). Let $T:= \epsilon [D]+  [W]$ for some small constant $\epsilon>0$. We see that $\cali{V}_{T,W}= \{V\}$ and $\vol(V)= d$ and 
$$\nu(T,V) \vol(V)= (1+\epsilon) d,\quad  c\vol(W) \|T\|(1+ \|T\|)\le c(2+\epsilon d)^2< (1+ \epsilon)d$$
if $d$ is big enough and $\epsilon:= 1/d$ (recall $c$ is independent of $T,W$). Thus  the inequality (\ref{ine-dimVlself-interdesire}) fails to be true in general if $\nu(T,V)- \nu(T,W)$ is replaced by $\nu(T,V)$. 
\end{example}

\bibliography{biblio_family_MA,biblio_Viet_papers}
\bibliographystyle{siam}

\bigskip

\noindent
\Addresses
\end{document}